\newcommand{\map}[3]{#1: #2 \rightarrow #3}
\newcommand{\setdef}[2]{\{#1 \;|\; #2\}}
\newcommand{\jdefine}{\colonequals}
\title{Automated Lyapunov Analysis of Primal-Dual Optimization Algorithms: An Interpolation Approach}
\author{Bryan Van Scoy\thanks{B. Van Scoy is with the Department of Electrical and Computer Engineering at Miami University, Oxford, OH 45056, USA.
\texttt{bvanscoy@miamioh.edu}}\and John W. Simpson-Porco\thanks{J. W. Simpson-Porco is with the Department of Electrical and Computer Engineering at the University of Toronto, Toronto, ON, M5S 3G4, Canada. \texttt{jwsimpson@ece.utoronto.ca}}\and Laurent Lessard\thanks{L. Lessard is with the Department of Mechanical and Industrial Engineering at Northeastern University, Boston, MA 02115,
USA. \texttt{l.lessard@northeastern.edu}\\[2pt]
This material is based upon work supported by the National Science Foundation under Grant No. 2136945 and 2139482.}}
\let\oldbibliography\thebibliography
\renewcommand{\thebibliography}[1]{%
  \oldbibliography{#1}%
  \setlength{\itemsep}{1pt}%
}
\def\F{\mathcal{F}}
\def\M{\mathcal{M}}
\def\A{\mathcal{A}}
\def\GG{\mathbf{G}}
\def\AA{\mathbf{A}}
\def\BB{\mathbf{B}}
\def\CC{\mathbf{C}}
\def\DD{\mathbf{D}}
\def\nn{\mathbf{n}}
\def\YF{\mathbf{Y_f}}
\def\UF{\mathbf{U_f}}
\def\YA{\mathbf{Y_A}}
\def\UA{\mathbf{U_A}}
\def\real{\mathbb{R}}
\newcommand{\sbmat}[1]{\left[\begin{smallmatrix}#1\end{smallmatrix}\right]}
\theoremstyle{definition}
\newtheorem{theorem}{Theorem}
\newtheorem{proposition}{Proposition}
\begin{document}

\maketitle

\begin{abstract}
Primal-dual algorithms are frequently used for iteratively solving large-scale convex optimization problems. The analysis of such algorithms is usually done on a case-by-case basis, and the resulting guaranteed rates of convergence can be conservative. Here we consider a class of first-order algorithms for linearly constrained convex optimization problems, and provide a linear matrix inequality (LMI) analysis framework for certifying worst-case exponential convergence rates. Our approach builds on recent results for interpolation of convex functions and linear operators, and our LMI directly constructs a Lyapunov function certifying the guaranteed convergence rate. By comparing to rates established in the literature, we show that our approach can certify significantly faster convergence for this family of algorithms.
\end{abstract}

\section{Introduction}

Primal-dual (or saddle-point) optimization methods have a rich history, dating back to the earliest days of mathematical programming \cite{TK:56,KA-LH-HU:58}. The core idea --- that of sequentially or simultaneously updating both primal and dual variables --- is now widely used in algorithms for solving constrained optimization problems, including in interior-point methods \cite{numerical-optimization}, the method of multipliers, and in distributed optimization methods such as ADMM \cite{admm}. 

There is significant overlap with the literature on \emph{operator splitting}, as finding a point satisfying the KKT conditions of a constrained optimization problem can be cast as the problem of finding a zero of a sum of monotone operators; see \cite{NK-JCP:15,PLC-JCP:21,LC-DK-AC-AK:23} for extensive overviews of this perspective. Convergence proofs in this literature generally rely on the construction of bespoke pre-conditioners, followed by applications of convergence results for known fixed-point algorithms (e.g., Krasnoselskii-Mann iterations), or by clever direct construction of Lyapunov-like functions. While we do not herein study operator splitting methods in generality, part of our goal is to establish some preliminary foundations for more systematic and automated analyses of such algorithms.


Primal-dual methods have attracted attention from controls researchers, particularly in the continuous-time setting where Lyapunov analysis techniques can be applied with relative ease; see \cite{DF-FP:10,AC-EM-JC:15,JWSP:16f,AC-EM-SL-JC:17,JWSP-BKP-NM-FD:18b,NKD-SZK-MRJ:19,DD-MRJ:19, DD-MRJ:20, XC-NL:20}. 
This has led to various control applications of the algorithms, such as in energy systems \cite{NL-CL-ZC-SHL:13,AC-JC:14,TS-CDP-AVDS:15,JWSP-BKP-NM-FD:16c,AB-ED:19}. However, only \cite{DD-MRJ:20} directly addresses the issue of the \emph{rate} of exponential convergence in discrete time, and the analysis presented is focused on a particular algorithm obtained via Euler discretization from the continuous-time version. 

By interpreting optimization algorithms as dynamical systems, specifically as robust controllers, integral quadratic constraints (IQCs) have been used to find tight bounds on convergence rates \cite{lessard,michalowsky2021robust}. Alternatively, one can directly generate a set of valid inequalities relating inputs and outputs of the objective function, and solve a \emph{meta}-optimization problem that searches for tight worst-case guarantees. This was applied in a \emph{finite-horizon} setting in the so-called PEP formulation \cite{interpolation}, and also in an asymptotic setting \cite{lifting,lessard2022analysis,hu2017dissipativity} to directly search for Lyapunov functions that certify a given convergence rate.

To the best of our knowledge, the aforementioned approaches have not previously been applied to analyze primal-dual algorithms for linearly constrained convex optimization. The closest works we found examined over-relaxed ADMM \cite{nishihara2015general}, or alternating gradient methods for bilinear games \cite{pmlr-v151-zhang22e} or smooth monotone games \cite{zhang2021unified}.

\paragraph{Contributions:} We consider a family of first-order primal-dual algorithms for solving linearly constrained convex optimization problems of the form
\begin{equation}\label{Eq:Opt}
    \minimize_{x \in \real^n} \,\, f(x) \quad \text{subject to} \quad Ax = b.
\end{equation}
Our main contribution is an automated framework for computing worst-case convergence rates of the algorithm over a class of problem data. We consider smooth and strongly convex $f$ and matrices $A$ with known bounds on the singular values. We show numerically that our analysis improves on known results. We also develop the set of multipliers for the class of smooth strongly convex functions and the class of linear functions with eigenvalues in a closed interval which may be of independent interest.


\section{Primal-dual iterations for linearly constrained convex optimization}
\label{Sec:PrimalDual}

Consider the optimization problem~\eqref{Eq:Opt},
%
where the goal is to minimize the objective function $\map{f}{\real^n}{\real}$ over the affine constraint set $\mathsf{C} \jdefine \setdef{x\in\real^n}{Ax = b}$ with $A \in \real^{r \times n}$ and $b \in \real^r$. Throughout this work, we make the following assumptions, which (among other things) ensure that the problem is feasible for any $b \in \real^r$ and possess a unique optimal solution $x_\star \in \real^n$. 

\begin{enumerate}
\item[(i)] For known constants $0 < m \leq L < \infty$, the objective function $f$ is $m$-strongly convex, continuously differentiable, and its gradient $\map{\nabla f}{\real^n}{\real^n}$ is globally Lipschitz continuous with Lipschitz constant $L$; we denote the set of all such functions by $\F(m,L)$, and we denote the condition number as $\kappa(f) = L/m$.
\item[(ii)] For known constants $0 < \underline{\sigma} \leq \overline{\sigma} < \infty$, the singular values $\sigma_i(A)$ of the constraint matrix $A \in \real^{r \times n}$ satisfy $\underline{\sigma} \leq \sigma_i(A) \leq \overline{\sigma}$ for all $i \in \{1,\ldots,r\}$; we denote the set of all such matrices by $\A(\underline{\sigma},\overline{\sigma})$, and we denote the condition number as $\kappa(A) = \overline{\sigma}/\underline{\sigma}$.
\end{enumerate}

Note that (ii) implies that $A$ has full row rank, and thus the constraints $Ax = b$ are linearly independent. 

Perhaps the most immediate iterative approach for computing the optimal solution of \eqref{Eq:Opt} would be projected gradient descent
\[
x_{k+1} = \mathrm{Proj}_{\mathsf{C}}(x_k - \alpha \nabla f(x_k)), \qquad k = 0,1,\ldots
\]
with step size $\alpha > 0$. In many large-scale applications however, computing this projection is too computationally expensive, and one encounters similar computational bottlenecks if gradient ascent is applied to the dual problem of \eqref{Eq:Opt}. Instead, iterative methods are sought which rely only on (a small number of) evaluations of $\nabla f$, $A$, and $A^{\tp}$ at each iteration \cite{NK-JCP:15,PLC-JCP:21}. 

Such methods can be developed through Lagrange relaxation of the equality constraint in \eqref{Eq:Opt}. For $\mu \geq 0$ define the \emph{augmented Lagrangian}
\[
L_{\mu}(x,\lambda) = f(x) + \lambda^{\tp}(Ax-b) + \tfrac{\mu}{2}\|Ax-b\|_2^2,
\]
with $\lambda \in \real^r$ the dual variable and $\mu$ the augmentation parameter. Under the present assumptions, strong duality holds, and $(x_{\star},\lambda_{\star})$ is primal-dual optimal for \eqref{Eq:Opt} if and only if it is a \emph{saddle point} of $L_{\mu}$. As an iterative method to determine a saddle point, one begins with any initial condition $(x_0,\lambda_0)$ and performs the gradient decent-ascent iterations
\begin{subequations}
\begin{align}\label{Eq:GDA}
x_{k+1} &= x_{k} - \alpha_{x} \nabla_{x}L_{\mu}(x_{k},\lambda_{k}) \notag\\
&= x_{k} - \alpha_{x}[\nabla f(x_{k})  + A^{\tp}\lambda_{k} + \mu A^{\tp}(Ax_k-b)]\\
\lambda_{k+1} &= \lambda_{k} + \alpha_{\lambda} \nabla_{\lambda}L_{\mu}(x_{k},\lambda_{k})\notag\\
&= \lambda_{k} + \alpha_{\lambda}(Ax_k-b)
\end{align}
\end{subequations}
where $\alpha_x, \alpha_{\lambda} > 0$ are step sizes. For obvious reasons, such algorithms are termed \emph{primal-dual} algorithms. In this work, we consider a slightly more general variation on \eqref{Eq:GDA}, given by
\begin{subequations}
\begin{align}\label{Eq:AHU-Extended}
x_{k+1} &= x_{k} - \alpha_{x}[\nabla f(x_{k})  + A^{\tp}\lambda_{k} + \mu A^{\tp}(Ax_k-b)]\\
\tilde{x}_{k} &= x_k + \gamma (x_{k+1}-x_k)\\
\lambda_{k+1} &= \lambda_{k} + \alpha_{\lambda}\left(A\tilde{x}_{k}-b\right),
\end{align}
\end{subequations}
where $\gamma \in [0,2]$ is an \emph{extrapolation} parameter. Various algorithms are contained as special cases of \eqref{Eq:AHU-Extended} and will serve as points of comparison. Our broad goal is to quantify the \emph{worst-case} asymptotic geometric convergence rates achieved by some selected iterative primal-dual algorithms over all possible instances of problem data $f \in \mathcal{F}(m,L)$ and $A \in \mathcal{A}(\underline{\sigma},\overline{\sigma})$.

\subsection{Literature on known rates}
\label{Sec:literature}

For sufficiently small step sizes, \eqref{Eq:AHU-Extended} converges  exponentially to the unique saddle point $(x_{\star},\lambda_{\star})$ of $L_{\mu}$. A significantly more challenging question is to provide non-conservative estimates of the worst-case asymptotic geometric convergence rate for the method over the class of problem data defined by $(\mathcal{F},\mathcal{A})$. 

We have found two relatively clear comparison points. First, \cite{SSD-WH:19} considers \eqref{Eq:AHU-Extended} with $\mu = \gamma = 0$. Translating the notation\footnote{In the notation of \cite{SSD-WH:19}, our set-up corresponds to the case where $f = 0$; our ``$A$'' is their ``$A^{\sf T}$'' and our ``$f$'' is their ``$-g$''.}, they use a Lyapunov function of the form
\[
    V(x_k,\lambda_k) = \|x_{k} - \nabla f^*(-A^{\tp}\lambda_k)\|_2 + c\,\|\lambda_k - \lambda_{\star}\|_2,
\]
where $f^*(z) = \sup_{x \in \real^n} x^\tp z - f(x)$ is the convex conjugate of $f$ \cite{convex-analysis} and $c \defeq 2\frac{L}{m^2}\frac{\overline{\sigma}^3}{\underline{\sigma}^2}$. Using stepsizes
\begin{equation}\label{stepsizes1}
    \alpha_{x} = \frac{2}{m+L} \quad\text{and}\quad
    \alpha_{\lambda} = \frac{m}{(m+L)\bigl(\frac{\overline{\sigma}^2}{m}+c\overline{\sigma}\bigr)},
\end{equation}
the decrease condition $V_{k+1} \leq \rho\,V_k$ holds with
\begin{equation}\label{bound1}
    \rho = 1 - \frac{1}{12\,\kappa(f)^3\,\kappa(A)^4}.
\end{equation}
Second, the authors of \cite{SAA-AHS:20} consider \eqref{Eq:AHU-Extended} with $\gamma = 1$, and for notational simplicity we consider here $\mu = 0$. Using the quadratic Lyapunov function
\[
V(x_k,\lambda_k) = (1-\alpha_x\alpha_{\lambda}\overline{\sigma}^2)\|x_k-x_{\star}\|_2^2 + \|\lambda_k-\lambda_{\star}\|_2^2
\]
and the step size conditions $\alpha_x < 1/L$ and $\alpha_{\lambda} < m/\overline{\sigma}^2$, they establish the decrease condition $V_{k+1} \leq \rho^2\,V_k$ with
\[
    \rho^2 = \max\{1-\alpha_{x} m\,(1-\alpha_{x}L),\,1-\alpha_{x}\alpha_{\lambda}\underline{\sigma}^2\}.
\]
The bound on the convergence rate is optimized by the step sizes
\begin{subequations}\label{stepsizes2}
\begin{align}
    \alpha_x &= \begin{cases} \frac{1}{2L} & \text{if }\kappa(A)\leq\sqrt{2} \\ \frac{1-\kappa(A)^{-2}}{L} & \text{otherwise} \end{cases}\\
    \alpha_\lambda &= \begin{cases} \frac{m}{4}\bigl(\frac{2}{\overline{\sigma}^2} + \frac{1}{\underline{\sigma}^2}\bigr) & \text{if }\kappa(A)\leq\sqrt{2} \\ \frac{m}{\overline{\sigma}^2} & \text{otherwise}, \end{cases}
\end{align}
\end{subequations}
and the convergence factor using these step sizes is
\begin{equation}\label{bound2}
    \rho^2 = \begin{cases} 1-\frac{1}{4\,\kappa(f)} & \text{if }\kappa(A)\leq\sqrt{2} \\ 1 - \frac{1}{\kappa(f)} \bigl(\frac{1}{\kappa(A)^2} - \frac{1}{\kappa(A)^4}\bigr) & \text{otherwise}. \end{cases}
\end{equation}



\section{Automated convergence analysis}\label{Sec:analysis}


We now return to the primal-dual algorithm \eqref{Eq:AHU-Extended} and rewrite it in the form of a linear fractional representation, as traditionally used in robust control \cite{CS-SW:15}. Let $A = U\Sigma V_1^{\tp}$ be a compact SVD of $A$, where $U\in\real^{r\times r}$ is orthogonal and $V_1 \in \real^{n\times r}$ has orthonormal columns. Let $V_2\in \real^{n\times (n-r)}$ be the orthogonal completion, so that $V = \bmat{V_1 & V_2} \in \real^{n\times n}$ is orthogonal. Here, $\Sigma = \mathrm{diag}(\sigma_1,\dots,\sigma_r)$ and we have the inequality $\bar\sigma \geq \sigma_1 \geq \dots \geq \sigma_r \geq \underline\sigma$. Consider now the invertible change of state
\[
p_k = V_1^{\tp}x_k, \qquad q_k = V_2^{\tp}x_k, \qquad \nu_{k} = -\Sigma U^{\tp}\lambda_k.
\]
Routine computations quickly show that
\[
\begin{aligned}
\begin{bmatrix}p_{k+1} \\ q_{k+1}\end{bmatrix} &= \begin{bmatrix}p_{k} \\ q_{k}\end{bmatrix} - \alpha_{x}V^{\tp}\nabla f(V \left[\begin{smallmatrix}p_k \\ q_k\end{smallmatrix}\right]) + \alpha_x \begin{bmatrix} \nu_k - \mu \Sigma^2 p_k \\ 0\end{bmatrix}\\
\tilde{p}_{k} &= p_{k} + \gamma\,(p_{k+1} - p_{k})\\
\nu_{k+1} &= \nu_{k} - \alpha_{\lambda} \Sigma^2 \tilde{p}_k.
\end{aligned}
\]
Eliminating $\tilde{p}_k$ and defining the concatenated state $\xi_k = (p_k,q_k,\nu_k)$, the dynamics can be expressed as
\[
\begin{aligned}
\xi_{k+1} &= A\xi_k + B_1 \left[\begin{smallmatrix}u_{k}^{1} \\ u_{k}^{2}\end{smallmatrix}\right] + B_2 \left[\begin{smallmatrix}u_{k}^{3} \\ u_{k}^{4}\end{smallmatrix}\right]\\
\left[\begin{smallmatrix}y_{k}^{1} \\ y_{k}^{2}\end{smallmatrix}\right] &= C_1 \xi_k + D_{11}\left[\begin{smallmatrix}u_{k}^{1} \\ u_{k}^{2}\end{smallmatrix}\right] + D_{12}\left[\begin{smallmatrix}u_{k}^{3} \\ u_{k}^{4}\end{smallmatrix}\right]\\
\left[\begin{smallmatrix}y_{k}^{3} \\ y_{k}^{4}\end{smallmatrix}\right] &= C_2 \xi_k + D_{21}\left[\begin{smallmatrix}u_{k}^{1} \\ u_{k}^{2}\end{smallmatrix}\right] + D_{22}\left[\begin{smallmatrix}u_{k}^{3} \\ u_{k}^{4}\end{smallmatrix}\right]
\end{aligned}
\]
where the matrices are defined by the blocks
\[
{\small\addtolength{\arraycolsep}{-1.2mm}\bmat{A & B_1 & B_2 \\ C_1 & D_{11} & D_{12} \\ C_2 & D_{21} & D_{22}} \!=\!
\addtolength{\arraycolsep}{0mm}
\left[\begin{array}{ccc|cc|cc}
I & 0 & \alpha_x I & -\alpha_{x} I & 0     & -\alpha_{x} \mu I & 0\\
0 & I & 0          & 0 & -\alpha_{x} I     & 0 & 0\\
0 & 0 & I          & 0 & 0                 & 0 & -\alpha_{\lambda}I\\ 
\hline
    I & 0  & 0         & 0 & 0                 & 0 & 0\\
0 & I & 0          & 0 & 0                 & 0 & 0\\
\hline
I & 0 & 0          & 0 & 0                 & 0 & 0\\
I & 0 & \gamma\alpha_{x}I       & -\gamma\alpha_{x} I & 0       & -\gamma\alpha_x\mu I & 0
\end{array}\right]
}
\]
and with inputs $(u_{k}^{1},u_{k}^{2})$ and $(u_{k}^{3},u_{k}^{4})$ defined by
\begin{equation}\label{eq:feedback}
\left[\begin{smallmatrix}u_{k}^{1} \\ u_{k}^{2}\end{smallmatrix}\right] = V^{\tp}\nabla f\Bigl(V\left[\begin{smallmatrix}y_{k}^{1} \\ y_{k}^{2}\end{smallmatrix}\right]\Bigr), \quad \left[\begin{smallmatrix}u_{k}^{3} \\ u_{k}^{4}\end{smallmatrix}\right] = \left[\begin{smallmatrix}\Sigma^2 & 0\\
0 & \Sigma^2\end{smallmatrix}\right]\left[\begin{smallmatrix}y_{k}^{3} \\ y_{k}^{4}\end{smallmatrix}\right]
\end{equation}
and outputs $(y_k^1,y_k^2,y_k^3,y_k^4) = (p_k,q_k,p_k,\tilde p_k)$. To simplify notation in the sequel, we assume without loss of generality that each input and output is one-dimensional; see the lossless dimensionality reduction in \cite{lessard}.

\subsection{Lifted dynamics}

Let $G(z)$ denote the transfer function corresponding to the state-space matrices above, which maps the set of inputs $(u_k^1,u_k^2,u_k^3,u_k^4)$ to the set of outputs $(y_k^1,y_k^2,y_k^3,y_k^4)$. This system is connected with the feedback in \eqref{eq:feedback} through the gradient of the objective function and the squared matrix of singular values.

To analyze the system, we will replace the feedback in \eqref{eq:feedback} with constraints on the inputs and outputs of $\nabla f$ and $\Sigma^2$. The more constraints that we use, the tighter the analysis will be. To obtain more constraints, we will lift the system so that the inputs and outputs are in a higher-dimensional space, and then apply the constraints between all iterates in this lifted space \cite{lifting}.

Given a lifting dimension $\ell\in\{1,2,\ldots\}$, let
\begin{equation}\label{Eq:LiftedSystem}
    \psi(z) = I_4\otimes\bmat{1 \\ z^{-1} \\ \vdots \\ z^{1-\ell}}
    \quad\text{and}\quad
    \GG = \bmat{\psi G \\ \psi}
\end{equation}
where $\otimes$ denotes the Kronecker product. The system $\GG$, called the \textit{lifted system}, is the map
\begin{subequations}\label{eq:lifted-iterates}
\begin{equation}\label{eq:lifted-map}
    (u_k^1,u_k^2,u_k^3,u_k^4) \mapsto (Y_k^1,\ldots,Y_k^4,U_k^1,\ldots,U_k^4)
\end{equation}
where the lifted iterates are
\begin{equation}
    Y_k^i = (y_k^i,y_{k-1}^i,\ldots,y_{k-\ell+1}^i), \quad i\in\{1,2,3,4\}
\end{equation}
\end{subequations}
and similarly for $U_k^i$. Each lifted iterate $Y_k^i$ and $U_k^i$ is an $\ell$-dimensional vector that consists of $\ell$ lagged iterates, where the lifting dimension is how many past iterates are used. When $\ell=1$, the lifted system is simply $\GG = \left[\begin{smallmatrix} G \\ I \end{smallmatrix}\right]$.

\subsection{Multipliers}

To analyze the system, we will replace the feedback \eqref{eq:feedback} with inequalities on the inputs and outputs of the system in the lifted space. We parameterize the set of inequalities using a symmetric block matrix, called a \textit{multiplier}, of the form
\begin{equation}\label{multiplier}
    M = \bmat{M_{11} & M_{12} \\ M_{21} & M_{22}}.
\end{equation}
We now describe the constraints for both the objective function and constraint matrix.

\paragraph{Objective function.}

We consider inequalities on the objective function gradient of the form
\begin{multline}\label{eq:F-quadratic-inequality}
  0 \leq \sum_{i=1}^\ell \sum_{j=1}^\ell \biggl((M_{11})_{ij}\,\bmat{y_{k-i+1}^1 \\ y_{k-i+1}^2}^\tp \bmat{y_{k-j+1}^1 \\ y_{k-j+1}^2} \\
  + 2\,(M_{12})_{ij}\,\bmat{y_{k-i+1}^1 \\ y_{k-i+1}^2}^\tp \bmat{u_{k-j+1}^1 \\ u_{k-j+1}^2} \\
  + (M_{22})_{ij}\,\bmat{u_{k-i+1}^1 \\ u_{k-i+1}^2}^\tp \bmat{u_{k-j+1}^1 \\ u_{k-j+1}^2}\biggr).
\end{multline}
The multiplier $M$ has dimensions $2\ell\times 2\ell$ and parameterizes all inequalities that are linear in the inner products between inputs and outputs of the gradient. The following result characterizes all multipliers such that this inequality holds for all iterates of the system. We provide the proof in the appendix.

\begin{proposition}[Objective multipliers]\label{prop:objective-multipliers}
    The quadratic inequality \eqref{eq:F-quadratic-inequality} holds for all iterates that satisfy the feedback \eqref{eq:feedback} for some function $f\in\F(m,L)$ if and only if the multiplier has the form
    \begin{equation}\label{Eq:ObjectiveMult}
        M = \bmat{-2mL & L+m \\ L+m & -2} \otimes R + \bmat{\mathbf{0} & S \\ S^\tp & \mathbf{0}}
    \end{equation}
    for some $\ell\times\ell$ symmetric matrix $R$ such that $R\mathbf{1}=0$, $R_{ii}\geq 0$ for all $i$, and $R_{ij}\leq 0$ for all $i\neq j,$\footnote{Matrices $R$ satisfying these conditions are also called \emph{diagonally hyperdominant with zero excess}.} and some $\ell\times\ell$ skew-symmetric matrix $S$ satisfying $S\mathbf{1} = \mathbf{0}$. We denote the set of all such matrices as $\M_\F(m,L)$.
\end{proposition}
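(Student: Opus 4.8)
The plan is to characterize the multipliers by reducing the claim to the known interpolation theorem for the class $\F(m,L)$ and then carefully tracking which linear combinations of interpolation inequalities can be ``certified'' without knowledge of the optimal point or function value. First I would recall the standard interpolation condition: a finite set of triples $\{(x_i, g_i, f_i)\}$ is $\F(m,L)$-interpolable if and only if for every pair $(i,j)$ the inequality
\[
f_i - f_j - g_j^\tp(x_i - x_j) \geq \tfrac{1}{2L}\|g_i-g_j\|^2 + \tfrac{m}{2(L-m)}\bigl\|x_i - x_j - \tfrac{1}{L}(g_i-g_j)\bigr\|^2
\]
holds (the Taylor--Hindi--Boyd / interpolation condition from \cite{interpolation}). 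Applied here, the data points are $(x_{k-i+1}, \nabla f(x_{k-i+1}), f(x_{k-i+1}))$ for $i = 1,\dots,\ell$, where in the transformed coordinates $x = V\sbmat{y^1 \\ y^2}$ and $\nabla f(x)$ corresponds to $V\sbmat{u^1 \\ u^2}$; since $V$ is orthogonal, all inner products $\|x_i - x_j\|^2$, $\langle x_i - x_j, g_i - g_j\rangle$, etc., are exactly the inner products appearing in \eqref{eq:F-quadratic-inequality}.

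Next I would establish the ``if'' direction: given $R$ diagonally hyperdominant with zero excess and $S$ skew-symmetric with $S\mathbf 1 = 0$, I need to show the quadratic form in \eqref{eq:F-quadratic-inequality} is a nonnegative combination of interpolation inequalities. The key identity is that the pairwise interpolation inequality, after clearing the $f_i$ terms, contributes a rank-structured quadratic in the differences; summing the $(i,j)$ inequality with nonnegative weight $-R_{ij}$ (for $i\neq j$) telescopes the function-value terms to zero precisely because $R\mathbf 1 = 0$ forces the coefficients to cancel in pairs, and the $R_{ii}\geq 0$, $R_{ij}\leq 0$ sign pattern is exactly what makes the weights $-R_{ij}\geq 0$ admissible. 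The quadratic residual then assembles into the $\sbmat{-2mL & L+m \\ L+m & -2}\otimes R$ term. The skew-symmetric $S$ part with $S\mathbf 1 = 0$ contributes a term that vanishes identically (it is a combination of the ``consistency'' equalities $g_j^\tp(x_i-x_j) + g_i^\tp(x_j - x_i) = (g_i - g_j)^\tp(x_i - x_j) - \|g_i - g_j\|^2 + \cdots$, or more directly an antisymmetric pairing that sums to zero), so adding it preserves validity — this is why $S$ is free.

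For the ``only if'' direction, I would argue that any valid multiplier must be expressible this way by a dimension/extreme-ray argument: the cone of valid quadratic inequalities in the inner-product variables is exactly the conic hull of the pairwise interpolation inequalities (plus their negatives where they hold with equality for extremal configurations — e.g. quadratic $f$), because the interpolation conditions are necessary and sufficient, so no additional valid inequality can exist. One then shows the conic hull, re-expressed in the $(M_{11},M_{12},M_{22})$ parameterization, is exactly the set \eqref{Eq:ObjectiveMult}; the constraints $R\mathbf 1 = 0$ and the sign pattern drop out of demanding that the function-value coefficients cancel and that the weights be nonnegative, while the skew part accounts for the lineality space of the cone. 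I expect the main obstacle to be this ``only if'' direction — specifically, showing that there are no ``extra'' valid inequalities beyond nonnegative combinations of the pairwise ones, which requires exhibiting, for any candidate $M$ outside the claimed set, an explicit $f\in\F(m,L)$ and iterate sequence violating \eqref{eq:F-quadratic-inequality}; the natural choice is a quadratic $f(x) = \tfrac12 x^\tp Q x$ with $mI \preceq Q \preceq LI$ together with a suitably chosen finite trajectory, reducing the violation to a semidefinite feasibility question that can be dualized to recover exactly the conditions on $R$ and $S$.
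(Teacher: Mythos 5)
Your overall route is the same as the paper's: invoke the $\F(m,L)$ interpolation conditions of \cite{interpolation}, take nonnegative combinations of the pairwise inequalities, demand that the function-value terms cancel, and treat necessity as a consequence of the interpolation conditions being both necessary and sufficient. However, there is a genuine gap in how you handle the skew block $S$, plus a related misattribution for the $R$ block. You assert that the $S$-part ``contributes a term that vanishes identically'' for gradient-consistent iterates. That is false for non-quadratic $f$: take $\ell=3$, $S = s\sbmat{0 & 1 & -1\\ -1 & 0 & 1\\ 1 & -1 & 0}$ (skew-symmetric with $S\mathbf{1}=\mathbf{0}$), scalar points $y=(0,1,2)$, and $u_i = f'(y_i)$ for an $f\in\F(m,L)$ with $f'(t)=t+\epsilon t^3$ near $[0,2]$ (extended to keep $f''$ bounded); then $2\sum_{i,j}S_{ij}\,y_i u_j = 12\,s\,\epsilon \neq 0$, with either sign available. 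The ``skew quadratic form is identically zero'' mechanism is exactly what the paper uses for the \emph{linear} operator $\Sigma^2$ in Proposition~\ref{prop:constraint-multipliers} (there $U=(I\otimes\Sigma^2)Y$ makes $YU^\top$ symmetric); it does not apply to $\nabla f$. In the paper's proof of Proposition~\ref{prop:objective-multipliers}, the $S$ block instead arises from the skew-symmetric part of the nonnegative weights $\lambda_{ij}$ on the interpolation inequalities, and the condition $S\mathbf{1}=\mathbf{0}$ is precisely what kills the leftover function-value term ($\pm 4(S\mathbf{1})^\top f$), not an identity satisfied by gradients. Correspondingly, your claim that $R\mathbf{1}=0$ is what telescopes the $f_i$ terms is off: with symmetric weights $-R_{ij}$ the $f$-coefficients cancel pairwise automatically since $h_{ij}=-h_{ji}$; the role of $R\mathbf{1}=0$ together with the sign pattern is to make $R$ the graph Laplacian of those nonnegative weights, so that the quadratic residual assembles into $\sbmat{-2mL & L+m\\ L+m & -2}\otimes R$ with the correct diagonal.

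The same misconception undermines your plan for the ``only if'' direction: you propose to build violating instances from quadratic $f$, but quadratics annihilate every skew block (with $u_i=Qy_i$, $Q$ symmetric, the skew pairing is a trace of a skew times a symmetric matrix), so quadratic test functions can never separate an inadmissible $S$; genuinely non-quadratic interpolable data, as in the example above, are required. Note also that the freedom in $S$ is more delicate than ``free'': in the conic-combination derivation, $S$ comes from the skew part of an entrywise-nonnegative weight matrix, which couples the size of $S_{ij}$ to $-R_{ij}$, so a complete argument must explain how an arbitrary skew $S$ with $S\mathbf{1}=\mathbf{0}$ is certified rather than dismissing it as an identity. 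The remaining elements of your outline (the interpolation conditions, the orthogonality of $V$ preserving all inner products, and the conic-hull/duality view of necessity) are consistent with the paper's argument and at a comparable level of detail.
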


\paragraph{Constraint matrix.}

We consider inequalities on the constraint matrix of the form
\begin{equation}\label{eq:A-quadratic-inequality}
  0 \leq \trace\left(M \bmat{Y_k^3 \\ Y_k^4 \\ U_k^3 \\ U_k^4} \bmat{Y_k^3 \\ Y_k^4 \\ U_k^3 \\ U_k^4}^\tp\right).
\end{equation}
The multiplier $M$ has dimensions $4\ell\times4\ell$ and parameterizes all inequalities that are linear in the inner products between inputs and outputs of the matrix $\Sigma^2$ of squared singular values of $A$. The following result characterizes the set of multipliers, which we prove in the appendix.

\begin{proposition}[Constraint multipliers]\label{prop:constraint-multipliers}
    The quadratic inequality \eqref{eq:F-quadratic-inequality} holds for all iterates that satisfy the feedback \eqref{eq:feedback} for some matrix $A\in\A(\underline{\sigma},\overline{\sigma})$ if the multiplier has the form
    \begin{equation}\label{Eq:ObjectiveConst}
        M = \bmat{-2\,\underline{\sigma}^2\,\overline{\sigma}^2 R & (\overline{\sigma}^2+\underline{\sigma}^2) R \\ (\overline{\sigma}^2+\underline{\sigma}^2) R & -2 R} + \bmat{0 & S \\ S^\tp & 0}
    \end{equation}
    for some $2\ell\times2\ell$ symmetric positive semidefinite matrix $R$, and some $2\ell\times 2\ell$ skew-symmetric matrix $S$. We denote the set of all such matrices as $\M_\A(\underline{\sigma},\overline{\sigma})$.
\end{proposition}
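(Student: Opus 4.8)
The plan is to verify the sufficiency claim directly by substituting the feedback into the quadratic form \eqref{eq:A-quadratic-inequality}. After the change of state, the constraint-matrix part of \eqref{eq:feedback} reads $u_k^3 = \Sigma^2 y_k^3$ and $u_k^4 = \Sigma^2 y_k^4$ with $\Sigma^2 = \mathrm{diag}(\sigma_1^2,\dots,\sigma_r^2)$ and each $\sigma_i^2 \in [\underline{\sigma}^2,\overline{\sigma}^2]$, the same $\Sigma^2$ appearing at every time index and in both channels. Collecting the lagged outputs into one matrix $Z$ whose rows are $y_k^3,\dots,y_{k-\ell+1}^3,y_k^4,\dots,y_{k-\ell+1}^4$ (so that $Z$ stacks the blocks $Y_k^3$ and $Y_k^4$), and using that $\Sigma^2$ is symmetric, the corresponding input block is exactly $Z\Sigma^2$; hence $\bmat{Y_k^3 \\ Y_k^4 \\ U_k^3 \\ U_k^4} = \bmat{Z \\ Z\Sigma^2}$.

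First I would expand the right-hand side of \eqref{eq:A-quadratic-inequality} for a generic block multiplier $M = \bmat{M_{11} & M_{12} \\ M_{12}^\tp & M_{22}}$ with $2\ell\times2\ell$ blocks. Writing the matrix inside the trace as $\bmat{Z \\ Z\Sigma^2}\bmat{Z \\ Z\Sigma^2}^\tp$ and using cyclic invariance of the trace to move the $\Sigma^2$ factors, the two off-diagonal terms combine, so the quadratic form becomes $\trace\!\big(Z^\tp M_{11} Z + Z^\tp(M_{12}+M_{12}^\tp)Z\,\Sigma^2 + \Sigma^2 Z^\tp M_{22} Z\,\Sigma^2\big)$. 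Next I substitute the proposed structure \eqref{Eq:ObjectiveConst}: the skew-symmetric summand contributes $S+S^\tp = 0$ and drops out, while for the remaining part $M_{11}=-2\underline{\sigma}^2\overline{\sigma}^2 R$, $M_{12}+M_{12}^\tp = 2(\underline{\sigma}^2+\overline{\sigma}^2)R$, and $M_{22}=-2R$. Setting $P \jdefine Z^\tp R Z \succeq 0$, the quadratic form reduces to $2\,\trace\!\big({-}\underline{\sigma}^2\overline{\sigma}^2 P + (\underline{\sigma}^2+\overline{\sigma}^2)\,P\Sigma^2 - \Sigma^2 P\Sigma^2\big)$.

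Since $\Sigma^2$ is diagonal, this last trace is evaluated entrywise and equals $2\sum_{i=1}^r P_{ii}\,\big({-}(\sigma_i^2-\underline{\sigma}^2)(\sigma_i^2-\overline{\sigma}^2)\big)$. Each $P_{ii}\ge 0$ because $P\succeq 0$, and each parenthesized factor is nonnegative because $\sigma_i^2\in[\underline{\sigma}^2,\overline{\sigma}^2]$, so the sum is nonnegative, which is precisely \eqref{eq:A-quadratic-inequality}; this completes the proof. The computation is otherwise routine, so the main thing to get right is the bookkeeping that identifies the stacked input block as $Z\Sigma^2$ — this is exactly where the constraint that a single symmetric operator with spectrum in the interval acts at every lag and in both channels gets used — together with the cancellation of the $S$ term under the trace. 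I would also remark that only sufficiency is claimed: an exact characterization would additionally have to capture multipliers exploiting the particular spectrum $\{\sigma_i^2\}$ of a fixed $A$, whereas \eqref{Eq:ObjectiveConst} is the natural S-procedure-type family valid uniformly over $\A(\underline{\sigma},\overline{\sigma})$, which is all the LMI analysis needs.
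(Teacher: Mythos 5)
Your proof is correct and follows essentially the same route as the paper's: substitute the feedback $U = Z\Sigma^2$ into the quadratic form, observe that the skew-symmetric summand cancels (your $S+S^\tp=0$ after cyclic rearrangement is the same fact the paper expresses as $\trace\bigl(S(UY^\tp - YU^\tp)\bigr)=0$), and show the $R$-part is nonnegative because $Z^\tp R Z \succeq 0$ and each $\sigma_i^2$ lies in $[\underline{\sigma}^2,\overline{\sigma}^2]$. Your closing remark that only sufficiency is claimed (and that the reference to \eqref{eq:F-quadratic-inequality} should read \eqref{eq:A-quadratic-inequality}) is consistent with the paper.
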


\subsection{Linear matrix inequality}

We now use the lifted system \eqref{Eq:LiftedSystem} and the multipliers \eqref{Eq:ObjectiveMult} and \eqref{Eq:ObjectiveConst} that characterize the objective function and constraint matrix to construct a linear matrix inequality (LMI) whose feasibility certifies convergence of the primal-dual algorithm \eqref{Eq:AHU-Extended} with a specified rate $\rho \in (0,1)$. 

Denote a minimal realization of the lifted system as $\GG \sim (\AA,\BB,\CC,\DD)$ and let $\nn$ denote the dimension of the realization. Recall that the lifted system maps the iterates as in \eqref{eq:lifted-iterates}. Let $\YF$ and $\UF$ denote the rows of $\CC$ and $\DD$ corresponding to pairs of inputs and outputs of the gradient, and let $\YA$ and $\UA$ denote rows corresponding to inputs and outputs of $\Sigma^2$. 
We can now state our main result.

\begin{theorem}[Analysis]\label{thm:analysis}
    Given $\rho\in(0,1)$, if there exists an $\nn\times\nn$ symmetric matrix $P$ and multipliers $M_1,M_2\in\M_\F(m,L)$ and $M_3,M_4\in\M_\A(\underline{\sigma},\overline{\sigma})$ such that
    \begin{subequations}\label{lmi}
    \begin{multline}\label{lmi1}
        0 \succeq \bmat{\AA^\tp P\AA - \rho^2 P & \AA^\tp P \BB \\ \BB^\tp P \AA & \BB^\tp P \BB} + \bmat{\YF \\ \UF}^\tp\! M_1 \bmat{\YF \\ \UF} \\
        + \bmat{\YA \\ \UA}^\tp\! M_3 \bmat{\YA \\ \UA}
    \end{multline}
    and
    \begin{multline}\label{lmi2}
        0 \preceq \bmat{P-I & 0 \\ 0 & 0} + \bmat{\YF \\ \UF}^\tp\! M_2 \bmat{\YF \\ \UF} \\
        + \bmat{\YA \\ \UA}^\tp\! M_4 \bmat{\YA \\ \UA},
    \end{multline}
    \end{subequations}
    then the primal-dual iterations from \eqref{Eq:AHU-Extended} converge linearly with rate $O(\rho^k)$ for all objective functions $f\in\F(m,L)$ and all constraint matrices $A\in\A(\underline{\sigma},\overline{\sigma})$.
\end{theorem}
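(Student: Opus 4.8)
The plan is to run a standard dissipativity / Lyapunov argument in the lifted coordinates, using the two LMIs \eqref{lmi1} and \eqref{lmi2} as the dissipation inequality and the lower-bound inequality respectively. First I would fix arbitrary problem data $f\in\F(m,L)$ and $A\in\A(\underline\sigma,\overline\sigma)$, run the iteration \eqref{Eq:AHU-Extended}, pass to the transformed state $\xi_k=(p_k,q_k,\nu_k)$, and form the lifted state, call it $\zeta_k$, of the minimal realization $\GG\sim(\AA,\BB,\CC,\DD)$; the lifted input is $w_k:=(u_k^1,u_k^2,u_k^3,u_k^4)$ and the lifted output stacks $(Y_k^i,U_k^i)$. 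Define the candidate Lyapunov function $V_k := \zeta_k^\tp P \zeta_k$. Left- and right-multiplying \eqref{lmi1} by $\bmat{\zeta_k^\tp & w_k^\tp}$ and its transpose, and using $\zeta_{k+1}=\AA\zeta_k+\BB w_k$, gives
\[
V_{k+1}-\rho^2 V_k \;\le\; -\bmat{\YF\zeta_k+\UF w_k \\ \UA\,\text{part}}^\tp M_1(\cdots) - (\cdots)^\tp M_3(\cdots).
\]
Now the key observation is that $\bmat{\YF\\\UF}\bmat{\zeta_k\\ w_k}$ is exactly the stacked vector of lagged gradient inputs/outputs appearing in \eqref{eq:F-quadratic-inequality}, and likewise $\bmat{\YA\\\UA}\bmat{\zeta_k\\ w_k}$ is the stacked vector in \eqref{eq:A-quadratic-inequality}. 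By Proposition~\ref{prop:objective-multipliers}, since $M_1\in\M_\F(m,L)$, the first quadratic form is $\ge 0$ along the trajectory; by Proposition~\ref{prop:constraint-multipliers}, since $M_3\in\M_\A(\underline\sigma,\overline\sigma)$, the second is $\ge 0$ as well. Hence both terms on the right are $\le 0$, and we obtain $V_{k+1}\le\rho^2 V_k$, so $V_k\le\rho^{2k}V_0$.

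Next I would convert this decay of $V_k$ into decay of the actual iterates. Multiplying \eqref{lmi2} by $\bmat{\zeta_k^\tp & w_k^\tp}$ on left and right and invoking Propositions~\ref{prop:objective-multipliers} and \ref{prop:constraint-multipliers} again (for $M_2,M_4$, with the inequality now oriented the other way) yields $\zeta_k^\tp(P-I)\zeta_k \ge -(\text{nonnegative terms})$, i.e. $V_k \ge \|\zeta_k\|^2$. Combined with the previous bound, $\|\zeta_k\|^2 \le \rho^{2k} V_0$, so $\|\zeta_k\| = O(\rho^k)$. Since $\xi_k$ is a fixed linear function of $\zeta_k$ (a sub-block, by minimality of the realization) and $(x_k,\lambda_k)$ is obtained from $\xi_k=(p_k,q_k,\nu_k)$ by the invertible orthogonal change of variables $x_k=Vp_k\oplus q_k$, $\lambda_k=-U\Sigma^{-1}\nu_k$, the distance of $(x_k,\lambda_k)$ to the (unique) saddle point $(x_\star,\lambda_\star)$ is also $O(\rho^k)$. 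One should center everything at the fixed point at the outset — replace $\xi_k$ by $\xi_k-\xi_\star$, $w_k$ by $w_k-w_\star$ — so that the affine terms ($b$, and the constant part of $\nabla f$ at optimality) cancel and the interpolation inequalities of the two propositions, which are stated relative to an optimal pair, apply verbatim.

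The main obstacle, and the step deserving the most care, is the bookkeeping that certifies that the quadratic forms $\bmat{\YF\\\UF}(\zeta_k,w_k)$ and $\bmat{\YA\\\UA}(\zeta_k,w_k)$ coincide with the vectors of lagged iterates over which Propositions~\ref{prop:objective-multipliers} and \ref{prop:constraint-multipliers} quantify. This rests on the construction of the lifted system $\GG$ in \eqref{Eq:LiftedSystem}: by design its output includes $\psi G$ and $\psi$ applied to the four channels, so the rows $\YF,\UF$ (resp. $\YA,\UA$) extract precisely $Y_k^{1,2},U_k^{1,2}$ (resp. $Y_k^{3,4},U_k^{3,4}$), each an $\ell$-vector of lags $y^i_{k-\ell+1},\dots,y^i_k$. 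Making this identification rigorous — and checking that the feedback \eqref{eq:feedback} is exactly what makes those lagged pairs admissible inputs/outputs of $\nabla f$ and of $\Sigma^2$, so that the propositions' hypotheses are met — is the crux; the rest is routine. A minor subtlety: the realization's internal state must be initialized consistently with the $\ell-1$ past iterates for the lifted identities to hold for all $k\ge 0$, but this only affects a finite transient and not the asymptotic $O(\rho^k)$ rate, so it can be absorbed into the constant.
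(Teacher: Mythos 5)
Your proposal is correct and follows essentially the same argument as the paper's own proof: center everything at the fixed point, multiply \eqref{lmi1} and \eqref{lmi2} on both sides by the lifted state and inputs, invoke Propositions~\ref{prop:objective-multipliers} and~\ref{prop:constraint-multipliers} to handle the multiplier terms, and chain $\|\tilde\xi_k\|^2 \leq \|\bm{\tilde\xi}_k\|^2 \leq V(\bm{\tilde\xi}_k) \leq \rho^{2k} V(\bm{\tilde\xi}_0)$. The one blemish is your step for \eqref{lmi2}, where deducing $V(\bm{\tilde\xi}_k)\geq\|\bm{\tilde\xi}_k\|^2$ from $V(\bm{\tilde\xi}_k)-\|\bm{\tilde\xi}_k\|^2\geq -(\mathrm{nonnegative\;terms})$ is a non sequitur as literally written (the multipliers $M_2,M_4$ must enter the lower-bound inequality with the orientation under which their nonnegativity helps rather than hurts), but the paper's own proof elides exactly the same point, so your route matches the paper's.
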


\begin{proof}
    Suppose the LMI \eqref{lmi} is feasible, and consider a trajectory of the primal-dual algorithm \eqref{Eq:AHU-Extended}. Let $\bm{\xi}_k$ denote the state of the lifted system $\GG$. For each iterate, let a tilde denote the iterate shifted by the fixed point of the system. Now multiply the LMI in \eqref{lmi1} on the right and left by the lifted state and inputs $(\bm{\tilde\xi}_k,\tilde u_k^1,\tilde u_k^2,\tilde u_k^3,\tilde u_k^4)$ and its transpose and use the fact that the inequalities \eqref{eq:F-quadratic-inequality} and \eqref{eq:A-quadratic-inequality} are nonnegative when $M_1,M_2\in\M_\F(m,L)$ and $M_3,M_4\in\M_\A(\underline{\sigma},\overline{\sigma})$. This produces the inequality $V(\bm{\tilde\xi}_{k+1}) \leq \rho^2\,V(\bm{\tilde\xi}_k)$. Likewise, from the LMI \eqref{lmi2}, we obtain the inequality $V(\bm{\tilde\xi}_k)\geq \|\bm{\tilde\xi}_k\|^2$. Since the state $\tilde\xi_k$ of the original system $G$ is contained in the lifted system, we have that $\|\bm{\tilde \xi}_k\|^2 \geq \|\tilde\xi_k\|^2$. Chaining all of these inequalities together gives
    \[
        \|\tilde\xi_k\|^2 \leq \|\bm{\tilde\xi}_k\|^2 \leq V(\bm{\tilde\xi}_k) \leq \ldots \leq \rho^{2k}\,V(\bm{\tilde\xi}_0).
    \]
    Taking the square root gives $\|\tilde\xi_k\|\leq c\,\rho^k$ for some $c>0$, so the iterates converge to the optimizer linearly with rate $O(\rho^k)$.
\end{proof}

\section{Results}\label{Sec:results}

We now compare our analysis with the results from the literature described in Section~\ref{Sec:literature}. In each case, we choose the objective function condition number $\kappa(f) = 2$ and the Lagrangian augmentation parameter $\mu=0$. We emphasize that our analysis applies to any values of these parameters, but we choose these values to be able to compare with known bounds.

\begin{figure}[ht]
    \includegraphics{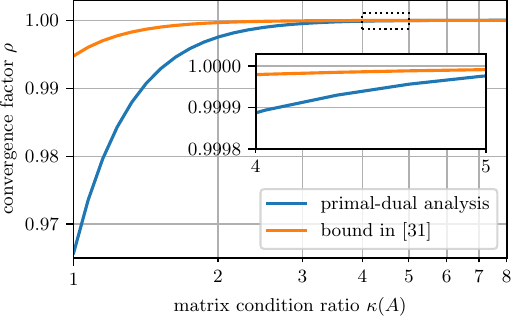}
    \caption{Comparison with \cite{SSD-WH:19}. The algorithm parameters are those in \eqref{stepsizes1} with extrapolation parameter $\gamma=0$.}
    \label{fig:comparison1}
\end{figure}

We use the step size selections from \eqref{stepsizes1} and \eqref{stepsizes2}; Figures \ref{fig:comparison1} and \ref{fig:comparison2} plot the convergence factor $\rho$ obtained\footnote{A bisection search was applied to determine the smallest $\rho$ for which the LMIs were feasible.} from our analysis in Theorem \ref{thm:analysis} along with the corresponding bound from the literature as a function of the matrix condition number $\kappa(A)$. In all cases, our approach certifies a faster rate of convergence. Note that the algorithm in Figure \ref{fig:comparison1} does not use extrapolation, while the algorithm in Figure \ref{fig:comparison2} does use extrapolation, and achieves a much faster convergence rate as a result.

\begin{figure}[ht]
    \centering
    \includegraphics{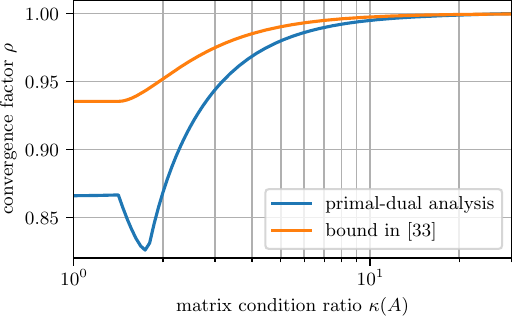}
    \caption{Comparison with \cite{SAA-AHS:20}. The algorithm parameters are those in \eqref{stepsizes2} with extrapolation parameter $\gamma=1$.}
    \label{fig:comparison2}
\end{figure}

The guaranteed convergence factor produced by our approach with step size selection \eqref{stepsizes2} is a non-monotonic function of $\kappa(A)$ in Figure \ref{fig:comparison2}. For any \textit{fixed} algorithm, the convergence rate must increase monotonically with the condition numbers $\kappa(f)$ and $\kappa(A)$. However, the step size selection \eqref{stepsizes2} is a function of $\kappa(A)$, and thus the algorithm used is varying at each point on this curve. This highlights that the bounds from the literature are conservative, and suggests that further improvements in the worst-case convergence rate could be obtained by optimizing the step-sizes subject to feasibility of our LMI.

\section{Conclusion}\label{Sec:conclusion}

Focusing on first-order primal-dual methods for linearly constrained convex optimization, we proposed a systematic method to search for a Lyapunov function that certifies a worst-case convergence rate of the algorithm across all smooth strongly convex functions and all constraint matrices with bounded singular values. Our analysis applies to a range of primal-dual algorithms, including those with extrapolation and Lagrangian augmentation. We compared our numerical results with two bounds from the literature, and our analysis yields better bounds in each case. Future work includes finding algorithm parameters that optimize the rates obtained from our analysis, improving the analysis by leveraging recent interpolation conditions for linear operators \cite{linear-interpolation}, and extending the approach to more general operator splitting methods \cite{LC-DK-AC-AK:23}.



\bibliographystyle{IEEEtran}

{\footnotesize
\bibliography{brevalias, Main2, JWSP, references}
}


\appendix

\section{Proof of Proposition \ref{prop:objective-multipliers}}

Let $y_i,u_i,f_i$ for $i\in [\ell]=\{1,\ldots,\ell\}$ denote a sequence of $\ell$ iterates. From \cite{interpolation}, these points are interpolable by an $L$-smooth and $m$-strongly convex function if and only if
\[
    q_{ij} := \sbmat{
    y_i \\ y_j \\ u_i \\ u_j}^\tp\! \underbrace{\sbmat{
    -mL & mL & m & -L\\
    mL & -mL & -m & L\\
    m & -m & -1 & 1\\
    -L & L & 1 & -1
    }}_H \sbmat{
    y_i \\ y_j \\ u_i \\ u_j} + h^\tp \sbmat{f_i \\ f_j} \geq 0
\]
for all $i,j\in [\ell]$ where $h = 2(L-m)\sbmat{1 \\ -1}$. In terms of the stacked vectors $u,y,f$,
\[
    q_{ij} = \bmat{y \\ u}^\tp\! H_{ij} \bmat{u \\ u} + h_{ij}^\tp f
\]
where $h_{ij} = \sbmat{e_i & e_j} h = 2(L-m)(e_i-e_j)$ and $H_{ij}$ is the matrix
\begin{align*}
    \sbmat{e_i^\tp & 0_\ell^\tp \\ e_j^{\tp} & 0_{\ell}^\tp \\
    0_{\ell}^{\tp} & e_i^\tp \\ 0_{\ell}^{\tp} & e_j^\tp}^\tp\!\!\!\!\!
    H\!
    \sbmat{e_i^\tp & 0_\ell^\tp \\ e_j^{\tp} & 0_{\ell}^\tp \\
    0_{\ell}^{\tp} & e_i^\tp \\ 0_{\ell}^{\tp} & e_j^\tp}
    \!\!=\!\! \sbmat{
    \!\!-mL(e_i-e_j)(e_i-e_j)^\tp & (e_i-e_j)(me_i - Le_j)^\tp\!\\
    (me_i - Le_j)(e_i-e_j)^\tp & -(e_i-e_j)(e_i-e_j)^\tp
    }
\end{align*}
where $e_i$ is the $i\textsuperscript{th}$ unit vector in $\real^\ell$. Taking a nonnegative linear combination of the inequalities $q_{ij} \geq 0$, we obtain $Q = \sum_{i \neq j}\lambda_{ij}\,q_{ij} \geq 0$ for all coefficients $\lambda_{ij} \geq 0$ with $\lambda_{ii} = 0$ for all $i$. With $\Lambda \in \real^{\ell \times \ell}_{\geq 0}$ the matrix with elements $\lambda_{ij}$, straightforward but tedious algebra establishes that
\begin{multline*}
    Q = \bmat{y \\ u}^\tp
    \bmat{ (-mL) R & \tfrac{m+L}{2}R + \tfrac{m-L}{2}T\\ 
    \star & -R}
    \bmat{y \\ u}\\
    +(L-m)\mathbf{1}^{\tp}(T-T^{\tp})f,
\end{multline*}
where the $\star$ block can be inferred from symmetry and
\begin{align*}
    R &\defeq \diag((\Lambda+\Lambda^\tp) \mathbf{1})-(\Lambda+\Lambda^\tp), \\
    T &\defeq \diag((\Lambda-\Lambda^\tp) \mathbf{1}) + (\Lambda-\Lambda^\tp).
\end{align*}
Note that $R$ is symmetric doubly hyperdominant with zero row sums, and $R$ and $T$ are independent, as they depend on the symmetric and skew-symmetric parts of $\Lambda$, respectively. Now define the skew symmetric matrix $S \defeq \tfrac{m-L}{2}(\Lambda - \Lambda^{\tp})$ and note that $\tfrac{m-L}{2}T = \mathrm{diag}(S\mathbf{1}) + S$ and $\tfrac{m-L}{2}T^{\tp} = \mathrm{diag}(S\mathbf{1}) - S = \mathrm{diag}(S\mathbf{1}) + S^{\tp}$. 
Then the nonnegative quantity $Q$ is
\[
    \sbmat{y \\ u}^\tp\bigl(\sbmat{-2mL & m+L\\ m+L & -2}\otimes R + \sbmat{\boldsymbol{0} & \mathrm{diag}(S\mathbf{1})+S\\ \star & \boldsymbol{0}}\bigr) \sbmat{y \\ u} - 4(S\mathbf{1})^\tp f,
\]
which is of the form \eqref{eq:F-quadratic-inequality} if and only if $S\mathbf{1} = \mathbf{0}$.\qedhere

\section{Proof of Proposition \ref{prop:constraint-multipliers}}

Consider a matrix $M$ of this form, and define the matrices
\[
    M_0 = \bmat{-2\,\underline{\sigma}^2\,\overline{\sigma}^2 & \overline{\sigma}^2+\underline{\sigma}^2 \\ \overline{\sigma}^2+\underline{\sigma}^2 & -2}
    \quad\text{and}\quad
    M_1 = \bmat{0 & 1 \\ -1 & 0}
\]
so that the multiplier is $M = M_0\otimes R + M_1\otimes S$. We first write the inequality \eqref{eq:A-quadratic-inequality} as
\[
    0\leq\sbmat{Y \\ U}^\tp\! M \sbmat{Y \\ U} \quad\text{where}\quad
    Y = \sbmat{Y_k^3 \\ Y_k^4}, \quad
    U = \sbmat{U_k^3 \\ U_k^4}.
\]
From the feedback \eqref{eq:feedback}, we have that $U = (I_{2\ell}\otimes\Sigma^2) Y$, so the matrix $YU^\tp = UY^\tp$ is symmetric. For the term $M_1\otimes S$ of the multiplier, the quadratic form is
\begin{align*}
    \sbmat{Y \\ U}^\tp\! (M_1\otimes S) \sbmat{Y \\ U}
    &= Y^\tp S U - U^\tp S Y \\
    &= \trace\bigl( S( UY^\tp -YU^\tp )\bigr)
    = 0.
\end{align*}
Therefore, this term does not affect the inequality. Without loss of generality, we can take $M_1\otimes S$ to be symmetric, in which case $S$ is skew-symmetric. For the term $M_0\otimes R$ of the multiplier, the quadratic form is
\begin{multline*}
    \sbmat{Y \\ U}^\tp (M_0\otimes R) \sbmat{Y \\ U}
    = (U-\underline{\sigma}^2 Y)^\tp R\,(\overline{\sigma}^2 Y - U) \\
    = \bigl((I\otimes\Sigma^2)-\underline{\sigma}^2 I\bigr) Y^\tp R Y \bigl(\overline{\sigma}^2-(I\otimes\Sigma^2)\bigr).
\end{multline*}
This quantity is nonnegative since $\Sigma$ is a diagonal matrix of singular values in the interval $[\underline{\sigma},\overline{\sigma}]$ and $Y^\tp R Y$ is positive semidefinite. Therefore, the inequality \eqref{eq:A-quadratic-inequality} holds for iterates that satisfy \eqref{eq:feedback} for all $M \in \M_\A(\underline{\sigma},\overline{\sigma})$.\qedhere

\end{document}